\theoremstyle{plain} 
\newtheorem{thm}{Theorem}[section]
\newtheorem{prop}[thm]{Proposition}
\newtheorem{lem}[thm]{Lemma}
\newtheorem{cor}[thm]{Corollary}
\theoremstyle{definition}
\newtheorem{defn}[thm]{Definition}
\newtheorem{rem}[thm]{Remark}
\newtheorem{ex}[thm]{Example}
\numberwithin{equation}{section}
\renewcommand{\theta}{\vartheta}
\renewcommand{\phi}{\varphi}
\renewcommand{\epsilon}{\varepsilon}
\renewcommand{\subset}{\subseteq}
\newcommand{\N}{\mathbb N}
\newcommand{\Z}{\mathbb Z}
\newcommand{\C}{\mathbb C}
\DeclareMathOperator{\id}{id}
\DeclareMathOperator{\Aut}{Aut}
\DeclareMathOperator{\QAut}{QAut}
\DeclareMathOperator{\GL}{GL}
\DeclareMathOperator{\PP}{\mathbb{P}}
\DeclareMathOperator{\E}{\mathbb{E}}
\DeclareMathOperator{\Var}{\mathbb{V}ar}
\begin{document}
\title[Almost all trees have quantum symmetry]{Almost all trees have quantum symmetry}

\author{Luca Junk}
\author{Simon Schmidt}
\author{Moritz Weber}
\address{Saarland University, Fachbereich Mathematik, Postfach 151150, 66041 Saar\-brü\-cken, Germany}
\email{junk@math.uni-sb.de}
\email{simon.schmidt@math.uni-sb.de}
\email{weber@math.uni-sb.de}
\thanks{The second and third authors were supported by the DFG project \emph{Quantenautomorphismen von Graphen}. The third author was supported by the SFB-TRR 195. This work was part of the first author's Bachelor's thesis written under the supervision of the other two authors.}

\date{\today}
\subjclass[2010]{46LXX (Primary); 20B25, 05CXX (Secondary)}
\keywords{compact quantum groups, quantum automorphism groups of graphs, quantum symmetries of graphs}

\begin{abstract}
From the work of Erd\H{o}s and R\'{e}nyi from 1963 it is known that almost all graphs have no symmetry. In 2017, Lupini, Man\v{c}inska and Roberson proved a quantum counterpart: Almost all graphs have no quantum symmetry. Here, the notion of quantum symmetry is phrased in terms of Banica's definition of quantum automorphism groups of finite graphs from 2005, in the framework of Woronowicz's compact quantum groups. Now, Erd\H{o}s and R\'{e}nyi also proved a complementary result in 1963: Almost all trees do have symmetry. The crucial point is the almost sure existence of a cherry in a tree. But even more is true: We almost surely have two cherries in a tree - and we derive that almost all trees have quantum symmetry. We give an explicit proof of this quantum counterpart of Erd\H{o}s and R\'{e}nyi's result on trees.
\end{abstract}

\maketitle

\section{Introduction and main results}

Erd\H{o}s and R\'{e}nyi \cite{Erdos1963} proved that almost all graphs are asym\-met\-ric, in the following sense: When choosing a graph on $n$ vertices uniformly at random, the probability that its automorphism group is trivial, tends to $1$ as $n$ tends to infinity. In contrast to this, they also showed that almost all trees are symmetric, i.e.~the probability that the automorphism group of a random tree on $n$ vertices is trivial, tends to $0$ as $n$ tends to infinity.

In recent years, the notion of a quantum automorphism group of a graph was introduced by Banica in \cite{banica2005quantum}, modifying a preceding version by Bichon \cite{bichon2003quantum}. It is a compact matrix quantum group in the sense of Woronowicz (see \cite{woronowicz1987compact}) enlarging the usual automorphism group of a graph and providing a subtler graph invariant. It is an interesting question, which graphs have a quantum automorphism group that is strictly larger than their classical automorphism group. In that case we say that the graph has \textit{quantum symmetry}. In general, this question is very hard to answer. There are some sufficient criteria that one can check but usually, there is no 'easy' way to decide whether or not a graph has quantum symmetry. For results about specific graphs or specific classes of graphs see \cite{banica2007quantum}, \cite{schmidt2018petersen}, \cite{schmidt2018quantum}.

Recently, Lupini, Man\v{c}inska and Roberson proved a quantum version of the first Erd\H{o}s R\'{e}nyi Theorem mentioned in the beginning: Almost all graphs are quantum asymmetric, i.e.~the quantum automorphism group of almost all graphs is trivial (\cite{lupini2017nonlocal}). A crucial ingredient in their proof is the so-called \textit{coherent algebra} of a graph. They show that it provides a new sufficient criterion for a graph to be quantum asymmetric. We will give a few details of this in Section 4.

In this paper we give a proof of a quantum analogue of the second Erd\H{o}s R\'{e}nyi Theorem mentioned above, namely: Almost all trees have quantum symmetry. The crucial ingredient of Erd\H{o}s and R\'{e}nyi's proof is to show that almost all trees contain a so-called \textit{cherry} (two vertices of degree one 'dangling' like a twin cherry on a common third vertex) -- indeed, by flipping this twin cherry, we obtain a non-trivial automorphism. Interestingly, for proving the quantum analogue of this, it is enough to show that almost all trees contain at least \textit{two} cherries from which we may derive the existence of two non-trivial automorphisms with disjoint supports. By applying a criterion of one of the authors of this article \cite{schmidt2018foldedcube}, this yields the result.

We summarize:
\newtheorem*{MThm}{Theorem}
\begin{MThm}\label{main_thm} \ \newline \vspace*{-\baselineskip} \enlargethispage{\baselineskip}
	\begin{enumerate}[(i)]
		\item \cite{Erdos1963} Almost all graphs have no symmetry.
		\item \cite{lupini2017nonlocal} Almost all graphs have no quantum symmetry.
		\item \cite{Erdos1963} Almost all trees have symmetry.
		\item Almost all trees have quantum symmetry.
	\end{enumerate}
\end{MThm}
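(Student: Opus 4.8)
The plan is to reduce the statement to a purely combinatorial fact about random trees and then invoke the quantum-symmetry criterion of \cite{schmidt2018foldedcube}, which we may take in the following form: if a finite graph admits two non-trivial automorphisms $\sigma,\tau$ with disjoint supports (i.e.\ disjoint sets of non-fixed vertices), then it has quantum symmetry. Recall that a \emph{cherry} consists of two leaves $a,b$ attached to a common vertex $c$; since $a$ and $b$ have $c$ as their only neighbour, the transposition $(a\,b)$ is an automorphism of the tree with support $\{a,b\}$. If a tree contains two cherries sitting at distinct centers $c_1\neq c_2$, the two corresponding leaf-flips are non-trivial order-two automorphisms, and their supports are disjoint: a leaf has a unique neighbour, and each $c_i$ has degree at least two, so the four leaves are pairwise distinct and distinct from the two centers. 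Hence it suffices to prove the combinatorial claim that, asymptotically almost surely, a uniformly random labelled tree on $n$ vertices contains two cherries at distinct centers.

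First I would establish a first-moment estimate in the uniform model on the $n^{n-2}$ labelled trees (Cayley's formula). Let $X$ count the cherries, i.e.\ the triples $(c,\{a,b\})$ with $a,b$ leaves adjacent to $c$. For a fixed such configuration, the labelled trees realising it are in bijection with the trees on the $n-2$ vertices $V\setminus\{a,b\}$, of which there are $(n-2)^{n-4}$, the leaves $a,b$ then being forced onto $c$. Since there are $n\binom{n-1}{2}$ configurations, this gives
\[
  \E[X]=\frac{n\binom{n-1}{2}(n-2)^{n-4}}{n^{n-2}}
       =\frac{n-1}{2}\left(1-\frac{2}{n}\right)^{n-3}
       \sim\frac{n}{2e^{2}},
\]
so the expected number of cherries grows linearly in $n$.

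To upgrade ``many cherries in expectation'' to ``at least two disjoint cherries with high probability'', I would apply the second moment method. The dominant contribution to $\E[X(X-1)]$ comes from ordered pairs of cherries on six distinct vertices; counting the realising trees by the same contraction (now $(n-4)^{n-6}$ trees per configuration) shows $\E[X(X-1)]=(1+o(1))\E[X]^2$, whence $\Var(X)=o(\E[X]^2)$. Chebyshev's inequality then yields $\PP(X\leq 1)\to 0$. Finally I would check that the two guaranteed cherries may be taken at distinct centers: the number of trees in which all cherries share a single center is of strictly smaller order, so a.a.s.\ there are at least two cherries with disjoint leaf sets, equivalently two distinct center vertices each carrying two leaves. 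Combined with the criterion above, this shows that almost all trees have quantum symmetry.

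The main obstacle is the second-moment computation: one must carry out the Cayley-type counting for pairs of cherries, isolate the leading term arising from disjoint cherries, and verify that the overlap cases (two cherries sharing a center or a leaf) contribute only lower-order terms, which is exactly what forces $\Var(X)=o(\E[X]^2)$ and lets Chebyshev close the argument. A secondary point requiring care is the passage from ``two cherries'' to ``two cherries with disjoint supports'', and, should one want the conclusion phrased for unlabelled trees, the transfer from the uniform labelled model to the unlabelled one.
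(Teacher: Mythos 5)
Your overall route is the same as the paper's: a second-moment/Chebyshev argument showing that a uniformly random labelled tree almost surely contains at least two cherries, followed by the criterion of \cite{schmidt2018foldedcube} applied to the two leaf-transpositions with disjoint supports. Your moment counts ($(n-2)^{n-4}$ trees per cherry configuration, $(n-4)^{n-6}$ per pair of vertex-disjoint cherries) are correct for your notion of cherry, and the step $\Var(X)=o(\E[X]^2)\Rightarrow\PP(X\le 1)\to 0$ is fine since $\E[X]\to\infty$. Parts (i)--(iii) of the theorem are literature citations in the paper as well (coherent algebras for (i) and (ii), one cherry for (iii)), so concentrating on (iv) is reasonable.

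The one genuine gap is the passage from ``at least two cherries'' to ``two cherries with distinct centers''. With your definition (two leaves attached to a common vertex, no constraint on the center's degree), two distinct cherries may share their center, in which case the two transpositions do not have disjoint supports and the criterion does not apply. You dismiss this by asserting that ``the number of trees in which all cherries share a single center is of strictly smaller order'', but this does not follow from anything you have computed: it amounts to showing that a.a.s.\ at least \emph{two} vertices have two or more leaf-neighbours, which requires its own second-moment argument. It cannot be waved away as a rare overlap, since the expected number of vertices carrying three or more leaf-neighbours is itself of order $n$, so centers supporting several cherries occur in a positive fraction of trees. The paper sidesteps this entirely by building the condition ``the center has degree $3$'' into the definition of a cherry; then for $n\ge 5$ two distinct cherries automatically have distinct centers and hence disjoint leaf pairs, and the identical second-moment computation (with $(n-3)^{n-4}$ and $(n-6)^{n-6}$ in place of your counts) closes the argument. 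Your proof becomes complete either by adopting that definition from the start, or by running the Chebyshev argument on the number of cherry-\emph{centers} rather than on the number of cherries.
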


Let us note that the almost sure existence of two cherries in a tree is probably well-known to experts. Nevertheless, we give a direct proof of this fact in order to present a complete and concise proof of the quantum version of Erd\H{o}s and R\'{e}nyi's result on trees.

\section{Preliminaries}
In this section we give some mathematical background on compact (matrix) quantum groups, graphs and their (quantum) symmetries.

\subsection{Graphs and Trees}

A \textbf{graph} is a tuple $\Gamma = (V, E)$ where $V$ is a non-empty set of vertices and $E \subseteq V\times V$ is a set of edges (in particular, we don't allow a graph to have multiple edges between the same pair of vertices). It is called \textbf{finite} if $V$ is finite, and \textbf{undirected} if we have $(i, j) \in E$ whenever $(j, i) \in E$. An edge of the form $(i,i) \in E$ is called a \textbf{loop}. If $v \in V$ is a vertex, we define its \textbf{degree} to be the number of neighbours of $v$, i.e.~the number of vertices $u \in V$ such that $(v,u) \in E$.

We will only be concerned with finite undirected graphs without loops and without multiple edges. Furthermore, we will usually identify the vertex set $V$ with the set $\{ 1, \ldots, n\}$ where $n = |V|$.

A \textbf{walk} of length $k$ in a graph $\Gamma = (V, E)$ is a $k$-tuple $(e_1, \ldots, e_k)$ of edges $e_j = (u_j, v_j) \in E$ such that $v_j = u_{j+1}$ for all $j=1,\ldots k-1$. It is called a \textbf{cycle} if $u_1 = v_k$.\\
A graph is called \textbf{connected} if for every pair of vertices $i, j \in V$ with $i \neq j$ there is a walk from $i$ to $j$, i.e.~$u_1 = i$ and $v_k = j$. A \textbf{tree} is a connected graph without cycles.

The \textbf{adjacency matrix} of a graph $\Gamma = (V, E)$ is the matrix $A = (a_{ij})_{i,j \in V}$ with entries
\[
	a_{i j} \coloneqq \begin{cases}
	1, & \textrm{if } (i,j) \in E \\
	0, & \textrm{otherwise.}
	\end{cases}
\]

\subsection{Symmetries of Graphs}

An \textbf{automorphism} of a graph $\Gamma = (V, E)$ is a bijection $\sigma : V \rightarrow V$ that preserves adjacency and non-adjacency, i.e.~$(i,j) \in E$ if and only if $(\sigma(i), \sigma(j)) \in E$ for all $i, j \in V$. The set of all automorphisms of $\Gamma$ forms a group $\Aut(\Gamma)$ under composition and is called the \textbf{automorphism group} of $\Gamma$. It can be identified with a subgroup of the symmetric group $S_n$ (where $n = \#V$) which can in turn be embedded as permutation matrices in $M_n(\C)$. The automorphism group then has a nice description in terms of the adjacency matrix $A$ of $\Gamma$:
\begin{equation*}
    \Aut(\Gamma) = \{ \sigma \in S_n \mid \sigma A = A \sigma \} \subset S_n
\end{equation*}

We call a graph $\Gamma$ \textbf{symmetric}, if there exists a non-trivial automorphism of $\Gamma$, and \textbf{asymmetric} otherwise.

\subsection{Compact Matrix Quantum Groups}

Compact matrix quantum groups were first defined by Woronowicz \cite{woronowicz1987compact} in an attempt to generalize compact matrix groups. As a general reference for this we refer the reader to \cite{timmermann2008invitation}, \cite{neshveyev2013compact} or \cite{weber2017introduction}.

\begin{defn}
	A \textbf{compact matrix quantum group (CMQG)} is a pair $(B, u)$ where $B$ is a unital $C^*$-algebra and $u = (u_{ij})_{i,j=1}^n$ is a matrix with entries in $B$ such that:
	\begin{enumerate}[(i)]
		\item $B$ is generated (as a $C^*$-algebra) by the entries of $u$.
		\item The $*$-homomorphism $\Delta : B \rightarrow B \otimes B$, $u_{ij} \mapsto \sum_{k=1}^n u_{ik} \otimes u_{kj}$ exists.
		\item The matrix $u$ and its transpose $u^t$ are invertible.
	\end{enumerate}
\end{defn}

\begin{ex}
	Let $G \subseteq \GL_n(\C)$ be a compact matrix group and let $(u_{ij})_{i,j=1}^n$ be the coordinate functions on $G$, i.e.
	\begin{align*}
	u_{ij} : G &\rightarrow \mathbb{C} \\
	B = (b_{kl})_{k,l=1}^n &\mapsto b_{ij}
	\end{align*}
	Then the pair $(C(G), u = (u_{ij})_{i,j=1}^n)$ where $C(G)$ is the algebra of continuous complex-valued functions on $G$ is a compact matrix quantum group. Moreover, all compact matrix quantum groups $(B, u)$ with commutative $C^*$-algebra $B$ are of this form (see \cite[Proposition~6.1.11]{timmermann2008invitation}). So under the identification of $G$ with $(C(G), u)$, compact matrix quantum groups generalize classical compact matrix groups.
\end{ex}

\begin{ex}
	Another very important example of a compact matrix quantum group is the quantum symmetric group $S_n^+$ defined by Wang in \cite{wang1998quantum}. It is given by the universal unital $C^*$-algebra
	\[
	C(S_n^+) \coloneqq C^*(u_{ij}, \ 1 \leq i,j \leq n \mid u_{ij}^* = u_{ij}^2 = u_{ij}, \ \sum_{k=1}^n u_{ik} = \sum_{k=1}^n u_{kj} = 1 \ \forall \ i, j)
	\]
	and it can be interpreted as a quantum analogue of the classical symmetric group $S_n$. Indeed, we have a surjective $*$-homomorphism
	$\phi : C(S_n^+) \rightarrow C(S_n)$ sending generators to coordinate functions, thus we can think of $S_n$ as a quantum subgroup of $S_n^+$. The map $\phi$ is an isomorphism for $n=1,2,3$ (so we have $S_n^+ = S_n$ for $n=1,2,3$) but for $n\geq4$, $C(S_n^+)$ is non-commutative and infinite-dimensional -- we then have more quantum permutations than ordinary permutations.
\end{ex}

\subsection{Quantum symmetries of graphs}

In 2003, Bichon \cite{bichon2003quantum} gave a definition of a quantum automorphism group of a finite graph. It was modified by Banica \cite{banica2005quantum} in 2005. See also \cite{schmidt2018quantum} for more on quantum symmetries of graphs.

\begin{defn}\cite{banica2005quantum}
Let $\Gamma = (V, E)$ be a graph on $n$ vertices without multiple edges and without loops and let $A$ be its adjacency matrix. The \textbf{quantum automorphism group} $\QAut(\Gamma)$ of $\Gamma$ is defined as the compact matrix quantum group with $C^*$-algebra
\begin{equation*}
    C(\QAut(\Gamma)) \coloneqq \faktor{C(S_n^+)}{\langle u A = A u\rangle}
\end{equation*}
We have the inclusions $\Aut(\Gamma) \subset \QAut(\Gamma) \subset S_n^+$ and we say that $\Gamma$ has \textbf{quantum symmetry}, if the first inclusion is strict, i.e.~if and only if $C(\QAut(\Gamma))$ is non-commutative.
\end{defn}

\section{The existence of two cherries}
In this section we show that almost all trees contain at least two cherries. This is probably well-known to experts, but in order to keep this article self-contained, we present a direct proof. We use this in the next section to prove our main result about the quantum symmetries of trees.

\begin{defn}
Let $\Gamma = (V,E)$ be a graph. A triple $(u_1, u_2, v)$ of vertices $u_1, u_2, v \in V$ is called a \textbf{cherry}, if
    \begin{enumerate}[(i)]
        \item $u_1$, $u_2$ and $v$ are pairwise distinct,
        \item $u_1$ and $u_2$ are adjacent to $v$,
        \item $u_1$ and $u_2$ have degree $1$ and
        \item $v$ has degree 3.
    \end{enumerate}
\end{defn}

\begin{rem}
If a graph contains a cherry $(u_1, u_2, v)$, then it admits the non-trivial automorphism that swaps $u_1$ and $u_2$ and fixes any other vertex. Hence the graph has symmetry.
\end{rem}

Erd\H{o}s and R\'{e}nyi showed in \cite{Erdos1963} that almost all trees contain at least one cherry, implying that almost all trees are symmetric. So one can rephrase their result as follows.

\begin{thm}\cite{Erdos1963}\label{thm:er1}
Almost all trees contain at least one cherry, in the sense that
\[
    \lim_{n \rightarrow \infty} \PP [C_n \geq 1] = 1
\]
where $C_n$ is the number of cherries in a tree that is drawn uniformly at random from the set of all trees on $n$ vertices.
\end{thm}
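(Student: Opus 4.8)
The plan is to establish the statement by the second moment method. Writing $C_n = \sum_{\tau} X_\tau$, where $\tau$ ranges over all configurations that could form a cherry (a vertex $v$ together with an unordered pair $\{u_1, u_2\}$ of other vertices) and $X_\tau$ is the indicator that $\tau$ is in fact a cherry, Chebyshev's inequality gives
\[
    \PP[C_n = 0] \le \PP\bigl[|C_n - \E[C_n]| \ge \E[C_n]\bigr] \le \frac{\Var[C_n]}{(\E[C_n])^2}.
\]
Thus it suffices to compute the first two moments of $C_n$ and to show that $\Var[C_n] = o\bigl((\E[C_n])^2\bigr)$; this forces $\PP[C_n \ge 1] \to 1$.

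For the first moment I would count, for a fixed candidate configuration $\tau$, the number of labeled trees on $n$ vertices in which $\tau$ is a cherry. Since $u_1, u_2$ are then leaves attached to $v$, deleting them yields a bijection onto the trees on the remaining $n-2$ vertices in which $v$ has degree $1$. Using Cayley's formula (there are $n^{n-2}$ labeled trees on $n$ vertices) together with its refinement counting trees by prescribed degree, namely that the number of labeled trees on $m$ vertices with degree sequence $d_1, \ldots, d_m$ equals $\binom{m-2}{d_1 - 1, \ldots, d_m - 1}$, one finds that the number of trees with $v$ a leaf is $(n-3)^{n-4}$. Summing the resulting probability over the $n\binom{n-1}{2}$ candidate configurations and estimating with $(1 - \tfrac{3}{n})^{n} \to e^{-3}$ gives $\E[C_n] \sim \tfrac12 e^{-3}\, n \to \infty$.

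The second moment $\E[C_n^2] = \sum_{\tau, \tau'} \PP[X_\tau = X_{\tau'} = 1]$ is where the argument becomes delicate, but a structural observation removes most of the usual casework: for $n > 4$ the apex of a cherry has exactly one non-leaf neighbour (otherwise the tree would be the star $K_{1,3}$), so two distinct cherries can share neither their apex nor a leaf and are therefore vertex-disjoint. Consequently only the diagonal terms and the vertex-disjoint pairs contribute. For a fixed disjoint pair I would again delete the four leaves, reducing to the count of trees on $n-4$ vertices in which the two apexes are both leaves, which the degree-sequence formula evaluates to $(n-6)^{n-6}$; hence $\PP[X_\tau = X_{\tau'} = 1] = (n-6)^{n-6}/n^{n-2}$.

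The main obstacle is then purely asymptotic: I must show that the joint probability for a disjoint pair is asymptotically the product of the individual probabilities, i.e. that
\[
    \frac{(n-6)^{n-6}\, n^{n-2}}{(n-3)^{2n-8}} \longrightarrow 1,
\]
which follows from the estimates $(1 - \tfrac{6}{n})^{n} \to e^{-6}$ and $(1 - \tfrac{3}{n})^{2n} \to e^{-6}$. Combining this with the fact that the vertex-disjoint pairs make up all but an $O(1/n)$ fraction of all ordered pairs of candidate configurations yields $\E[C_n^2] = \E[C_n] + (1 + o(1))(\E[C_n])^2$. Since $\E[C_n] \to \infty$, the diagonal contribution is of lower order and $\Var[C_n] = o\bigl((\E[C_n])^2\bigr)$, so Chebyshev's inequality completes the proof. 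I expect the careful bookkeeping of the lower-order terms in these $(1 \pm c/n)^n$ limits to be the most technical part of the write-up.
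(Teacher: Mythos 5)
Your proposal is correct and follows essentially the same route as the paper: a second moment/Chebyshev argument built on the counts $(n-3)^{n-4}$ and $(n-6)^{n-6}$ coming from (refinements of) Cayley's formula, differing only cosmetically in that you delete the cherry leaves and invoke the degree-sequence count where the paper attaches a cherry to a tree on the remaining labels, and in that you make the vertex-disjointness of distinct cherries explicit rather than encoding it in the case analysis of the joint expectation (Lemma~\ref{lem:expectation2}). Incidentally, your asymptotic $\E[C_n]\sim\frac{1}{2}e^{-3}n$ is the correct one -- the simplification $\frac{1}{2}n+O(1)$ stated in Lemma~\ref{lem:expectation_Cn} drops the factor $(1-3/n)^{n-4}\to e^{-3}$ -- but this discrepancy does not affect the ratios $\Var[C_n]/\E[C_n]^2\to 0$ on which the conclusion rests.
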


\begin{cor}
Almost all trees have symmetry.
\end{cor}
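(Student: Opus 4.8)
The plan is to deduce the corollary immediately from Theorem~\ref{thm:er1} together with the preceding remark. The remark tells us that the existence of a single cherry $(u_1,u_2,v)$ in a graph already forces the existence of a non-trivial automorphism, namely the transposition swapping $u_1$ and $u_2$ while fixing every other vertex; such a graph is symmetric by definition. Theorem~\ref{thm:er1} tells us that the probability that a uniformly random tree on $n$ vertices contains at least one cherry tends to $1$ as $n\to\infty$. Chaining these two facts is the whole proof.

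Concretely, I would argue as follows. Let $T_n$ denote a tree drawn uniformly at random from the set of all trees on $n$ vertices, and let $C_n$ be its number of cherries. The event $\{C_n\geq 1\}$ is contained in the event that $T_n$ is symmetric, because by the remark every tree with at least one cherry admits a non-trivial automorphism. By monotonicity of probability we therefore have
\[
    \PP[\,T_n \text{ is symmetric}\,] \;\geq\; \PP[C_n \geq 1].
\]
Taking the limit $n\to\infty$ and invoking Theorem~\ref{thm:er1}, the right-hand side tends to $1$, so the left-hand side tends to $1$ as well. Hence almost all trees are symmetric, which is exactly the assertion.

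I do not anticipate any genuine obstacle here, since the corollary is a direct logical consequence of results already established in the excerpt; the only care needed is to phrase ``almost all'' precisely as the limiting-probability statement matching Theorem~\ref{thm:er1}. The real mathematical content — the almost sure existence of a cherry — lives in Theorem~\ref{thm:er1} itself and is not re-proved here. (The more substantial work of the paper, establishing the existence of \emph{two} cherries and thereby quantum symmetry, is carried out in the subsequent sections and is not needed for this classical corollary.)
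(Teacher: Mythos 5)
Your proposal is correct and follows exactly the route the paper intends: the remark preceding Theorem~\ref{thm:er1} shows that a cherry yields a non-trivial automorphism, so the event $\{C_n\geq 1\}$ is contained in the event of being symmetric, and Theorem~\ref{thm:er1} gives $\PP[C_n\geq 1]\to 1$. The paper leaves this one-line deduction implicit, and your write-up simply makes the monotonicity step explicit.
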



We will now show that even
\[
    \lim_{n \rightarrow \infty} \PP [C_n \geq 2] = 1
\]
holds in the above setting. The proof of this is only a slight modification of the proof of Theorem~\ref{thm:er1}. Note however that we use a somewhat different notion of cherries than Erd\H{o}s and R\'{e}nyi. In their definition, the requirement (iv) from ours is missing. This changes the formulas in the subsequent proofs by a small degree in comparison with the original arguments in \cite{Erdos1963}.

We begin by fixing some notation. For $n \in \N$ let $T_n$ be a tree on $n$ vertices and denote these vertices by $v_1, \ldots, v_n$. For every choice of indices $i_1, i_2, j \in \{1, \ldots, n\}$ we define
\begin{equation*}
    \epsilon_{i_1, i_2, j}(T_n) \coloneqq \begin{cases*}
        1 \quad & if $(v_{i_1}, v_{i_2}, v_j)$ is a cherry in $T_n$ \\
        0 & otherwise.
    \end{cases*}
\end{equation*}
We equip the set of all labelled trees on $n$ vertices with the uniform probability measure, turning $\epsilon_{i_1, i_2, j}$ into a random variable.

Note furthermore that by Cayley's formula, the number of labelled trees on $n$ vertices is $n^{n-2}$. We will use this fact several times in the following proofs.

\begin{lem}\label{lem:expectation1}
We have
\[
    \E[\epsilon_{i_1, i_2, j}] = \frac{(n-3)^{n-4}}{n^{n-2}}
\]
for all pairwise distinct $i_1, i_2, j \in \{1, \ldots, n\}$.
\end{lem}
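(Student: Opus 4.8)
The plan is to compute the expectation as a probability and then reduce the counting to a standard enumeration of trees with a prescribed leaf. Since $\epsilon_{i_1,i_2,j}$ is an indicator random variable and the measure on trees is uniform, we have $\E[\epsilon_{i_1,i_2,j}] = \PP[\epsilon_{i_1,i_2,j}=1]$, and by Cayley's formula this equals $N/n^{n-2}$, where $N$ denotes the number of labelled trees on the vertex set $\{v_1,\ldots,v_n\}$ in which $(v_{i_1},v_{i_2},v_j)$ is a cherry. Thus the entire problem reduces to determining $N$.

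To count $N$, I would set up a bijection with a simpler family of trees. Given a tree $T_n$ in which $(v_{i_1},v_{i_2},v_j)$ is a cherry, the vertices $v_{i_1}$ and $v_{i_2}$ are leaves attached to $v_j$, and $v_j$ has degree exactly $3$. Deleting $v_{i_1}$ and $v_{i_2}$ together with their incident edges yields a tree $T'$ on the $n-2$ vertices $\{v_1,\ldots,v_n\}\setminus\{v_{i_1},v_{i_2}\}$, and in $T'$ the vertex $v_j$ now has degree $3-2=1$, i.e. it has become a leaf. Conversely, any tree on these $n-2$ vertices in which $v_j$ is a leaf can be extended to a tree on all $n$ vertices by attaching $v_{i_1}$ and $v_{i_2}$ as two fresh leaves at $v_j$; this raises the degree of $v_j$ to exactly $3$, so that $(v_{i_1},v_{i_2},v_j)$ becomes a cherry. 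These two operations are mutually inverse, so $N$ equals the number of trees on the $n-2$ labelled vertices above having $v_j$ as a leaf.

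It then remains to count trees on $m \coloneqq n-2$ labelled vertices in which a fixed vertex is a leaf. I would argue via Pr\"ufer sequences: a labelled tree on $m$ vertices corresponds bijectively to a sequence of length $m-2$ over $\{1,\ldots,m\}$, and the degree of a vertex equals the number of its occurrences plus one. Hence $v_j$ is a leaf precisely when it does not occur in the sequence, and the number of such sequences is $(m-1)^{m-2}$. (Alternatively, one may delete the leaf $v_j$ to obtain a tree on $m-1$ vertices and record which of the remaining $m-1$ vertices it was attached to, giving $(m-1)\cdot(m-1)^{m-3}$.) Substituting $m=n-2$ yields $N=(n-3)^{n-4}$, and therefore $\E[\epsilon_{i_1,i_2,j}] = (n-3)^{n-4}/n^{n-2}$.

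The one point deserving genuine care is the bijection of the second step, and this is where the degree-$3$ requirement (iv) of a cherry becomes essential: it is exactly this condition that guarantees that deleting the two leaves leaves $v_j$ with degree exactly $1$, and, in the reverse direction, that re-attaching two leaves at $v_j$ produces a vertex of degree exactly $3$ rather than higher. With that verified, the remaining enumeration is entirely routine.
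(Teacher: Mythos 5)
Your proof is correct, and it arrives at the count $(n-3)^{n-4}$ by a slightly different bookkeeping than the paper. The paper strips off the entire cherry (all three vertices $v_{i_1},v_{i_2},v_j$) in one step, obtaining a tree on $n-3$ labelled vertices together with a distinguished attachment vertex, and multiplies Cayley's count $(n-3)^{n-5}$ by the $n-3$ choices of attachment point. You instead peel off only the two leaves, reducing to the count of trees on $n-2$ labelled vertices in which $v_j$ is a leaf, and finish with the Pr\"ufer correspondence (a vertex is a leaf iff it does not occur in the sequence of length $m-2$, giving $(m-1)^{m-2}$ with $m=n-2$). The two reductions are equivalent --- your parenthetical alternative (delete the leaf $v_j$ and record its neighbour) is exactly the paper's construction --- but your version has two small advantages: the Pr\"ufer argument is a clean, standard enumeration that avoids having to argue that ``attaching a cherry at $u$'' and ``reading off the attachment point'' are mutually inverse, and you make explicit where condition (iv) (the degree-$3$ requirement on $v_j$) enters the bijection, a point the paper passes over silently. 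Both computations check out: $(n-3)\cdot(n-3)^{n-5}=(n-3)^{n-4}=(m-1)^{m-2}\big|_{m=n-2}$.
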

\begin{proof}
Since
\[
    \E[\epsilon_{i_1, i_2, j}] = \frac{|\{\text{trees on } n \text{ vertices with a cherry at } (i_1, i_2, j)\}|}{|\{\text{trees on } n \text{ vertices}\}|}
\]
we only have to calculate the numerator, as the denominator is $n^{n-2}$ by Cayley's formula.

Let $T = (V,E)$ be a tree on $n-3$ vertices labelled with $\{1, \ldots, n\} \setminus \{i_1, i_2, j\}$. By attaching a cherry $(v_{i_1}, v_{i_2}, v_{j})$ at any vertex $u \in V$ we can construct a tree on $n$ vertices with a cherry at $(i_1, i_2, j)$. On the other hand, any tree on $n$ vertices with a cherry at $(i_1, i_2, j)$ can be constructed in this way. Since $T$ has $n-3$ vertices, we have $n-3$ possibilities for choosing $u$, thus there are $(n-3)(n-3)^{n-5} = (n-3)^{n-4}$ trees on $n$ vertices with a cherry at $(i_1, i_2, j)$, so the claim follows.
\end{proof}

\begin{lem}\label{lem:expectation2}
Let $n \geq 5$ and $i_1, i_2, j_1, i_3, i_4, j_2 \in \{1, \ldots, n\}$. We have:
\[
    \E[\epsilon_{i_1, i_2, j_1}\epsilon_{i_3, i_4, j_2}] =
    \begin{cases*}
        \frac{(n-6)^{n-6}}{n^{n-2}} \quad & if $i_1,i_2,i_3,i_4,j_1,j_2$ are all different \\
        \frac{(n-3)^{n-4}}{n^{n-2}} & if $i_1, i_2, j_1$ are all different\\
        & and $j_1 = j_2$ and $\{i_1, i_2\} = \{i_3, i_4\}$ \\
        0 & otherwise
    \end{cases*}
\]
\end{lem}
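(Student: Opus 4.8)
The plan is to compute the expectation $\E[\epsilon_{i_1,i_2,j_1}\epsilon_{i_3,i_4,j_2}]$ by the same counting strategy used in Lemma~\ref{lem:expectation1}, namely by writing
\[
    \E[\epsilon_{i_1,i_2,j_1}\epsilon_{i_3,i_4,j_2}] = \frac{\#\{\text{trees on } n \text{ vertices with cherries at both } (i_1,i_2,j_1) \text{ and } (i_3,i_4,j_2)\}}{n^{n-2}},
\]
since the product of the two indicator random variables is itself the indicator of the event that both triples are cherries. The denominator is again $n^{n-2}$ by Cayley's formula, so everything reduces to counting the numerator in each of the three cases. First I would observe that the three cases of the statement correspond exactly to the possible overlap patterns of the two index triples that are compatible with both being cherries: the triples are entirely disjoint; they coincide as cherries (same apex $j_1=j_2$ and same pair of leaves $\{i_1,i_2\}=\{i_3,i_4\}$); or they overlap in some other, incompatible way.

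For the \emph{disjoint} case I would mimic the construction in Lemma~\ref{lem:expectation1}: start from a tree on the remaining $n-6$ vertices labelled by $\{1,\dots,n\}\setminus\{i_1,i_2,i_3,i_4,j_1,j_2\}$, and attach the two cherries by choosing, for each apex, a vertex of the base tree to which it is joined. By Cayley's formula there are $(n-6)^{n-8}$ such base trees, and there are $n-6$ independent choices for the attachment point of each of the two apices, giving $(n-6)^2(n-6)^{n-8} = (n-6)^{n-6}$ trees, which matches the claimed value. The only subtlety to check here is that attaching two cherries to a tree on $n-6$ vertices indeed yields a genuine tree on $n$ vertices in which both triples are cherries (in particular that the apices end up with degree exactly $3$ and the leaves with degree exactly $1$), and that this construction is a bijection onto the trees being counted; this is routine once one notes the base tree is recovered by deleting the six cherry vertices.

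For the \emph{coinciding} case the two indicators are equal, so $\epsilon_{i_1,i_2,j_1}\epsilon_{i_3,i_4,j_2} = \epsilon_{i_1,i_2,j_1}$ (using $\epsilon^2 = \epsilon$ for a $\{0,1\}$-valued variable together with the symmetry of a cherry in its first two arguments), and the value is immediately $\frac{(n-3)^{n-4}}{n^{n-2}}$ by Lemma~\ref{lem:expectation1}. The main obstacle, and the step requiring the most care, is the \emph{otherwise} case: I would argue that any remaining overlap pattern forces the product to vanish, i.e. that no tree can simultaneously realize both triples as cherries. This is where the degree condition (iv) in the definition of a cherry does the work. One must enumerate the overlap possibilities --- for instance an apex of one cherry coinciding with a leaf or apex of the other, a shared leaf, or a partial overlap of the leaf pairs --- and check in each that the degree constraints ($\deg = 1$ for leaves, $\deg = 3$ for apices) become contradictory, so that the numerator is zero. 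Once these cases are exhausted, the three-way split of the formula is established.
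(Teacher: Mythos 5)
Your proposal follows essentially the same route as the paper: both reduce the expectation to counting trees with the prescribed cherries via Cayley's formula, obtain $(n-6)^{n-6}$ in the disjoint case by attaching two cherries to a base tree on $n-6$ vertices, reduce the coinciding case to Lemma~\ref{lem:expectation1}, and dismiss all other overlap patterns as impossible. If anything you are slightly more explicit than the paper about the ``otherwise'' case, which the paper asserts without enumeration; your plan to check the degree constraints there is sound (for $n$ large enough that a shared apex with three leaf-neighbours would disconnect the tree).
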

\begin{proof}
Similarly as above, we only have to calculate the numerator of
\[
    \E[\epsilon_{i_1, i_2, j_1}\epsilon_{i_3, i_4, j_2}] = \frac{|\{\text{trees on } n \text{ vertices with cherries at } (i_1, i_2, j_1) \text{ and } (i_3, i_4, j_2)\}|}{|\{\text{trees on } n \text{ vertices}\}|}
\]
So let $T = (V,E)$ be a tree with vertices labelled with $\{1,\ldots,n\} \setminus \{j_1,i_1,i_2,j_2,i_3,i_4\}$.

In the case that all labels $j_1,i_1,i_2,j_2,i_3,i_4$ are different from each other, we can attach cherries $(v_{i_1},v_{i_2},v_{j_1})$ and $(v_{i_3},v_{i_4},v_{j_2})$ at any two vertices $u_1$ and $u_2$ of $\Gamma$ and thereby construct a tree on $n$ vertices with two cherries at $(i_1,i_2,j_1)$ and $(i_3,i_4,j_2)$. On the other hand, every tree on $n$ vertices with two cherries at $(i_1,i_2,j_1)$ and $(i_3,i_4,j_2)$ can be constructed in this way. Since $T$ has $n-6$ vertices, we have $n-6$ possibilities for choosing $u_1$ and $u_2$ respectively. Thus there are $(n-6)(n-6)(n-6)^{n-8} = (n-6)^{n-6}$ trees on $n$ vertices with two cherries at $(i_1,i_2,j_1)$ and $(i_3,i_4,j_2)$.

In the case that $j_1, i_1, i_2$ are distinct, $j_1 = j_2$ and $\{i_1, i_2\} = \{i_3, i_4\}$, we can conclude as in the case of three labels (see Lemma~\ref{lem:expectation1}) that the number of trees on $n$ vertices with a cherry at $(i_1,i_2,j_1)$ is $(n-3)^{n-4}$.

In all other cases, there is no tree on $n$ vertices with cherries at $(i_1,i_2,j_1)$ and $(i_3,i_4,j_2)$.
\end{proof}

As in Theorem~\ref{thm:er1} we denote by $C_n(T_n)$ the number of cherries in the tree $T_n$ on $n$ vertices. With the notation from above we can express this as
\begin{equation}
    C_n(T_n) = \sum_{j=1}^n \sum_{\substack{i_1=1\\i_1\neq j}}^n \sum_{\substack{i_2=1\\i_2\neq j}}^{i_1 - 1} \epsilon_{i_1, i_2, j}(T_n) \label{num_cherry_sum}
\end{equation}

\begin{lem}\label{lem:expectation_Cn}
The expectation of $C_n$ is
\[
    \E[C_n] = \frac{1}{2} n(n-1)(n-2)(n-3)^{n-4} \frac{1}{n^{n-2}} = \frac{n}{2e^3} + O(1)
\]
\end{lem}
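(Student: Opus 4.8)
The plan is to evaluate $\E[C_n]$ directly from the decomposition \eqref{num_cherry_sum} by linearity of expectation together with Lemma~\ref{lem:expectation1}, and then to read off the asymptotics. First I would apply linearity of expectation to \eqref{num_cherry_sum}, so that
\[
    \E[C_n] = \sum_{j=1}^n \sum_{\substack{i_1=1\\i_1\neq j}}^n \sum_{\substack{i_2=1\\i_2\neq j}}^{i_1 - 1} \E[\epsilon_{i_1, i_2, j}].
\]
The key observation is that the summation ranges enforce $i_2 < i_1$ with both indices different from $j$, so $i_1, i_2, j$ are pairwise distinct in \emph{every} term; hence Lemma~\ref{lem:expectation1} applies uniformly and each summand equals the single constant $\frac{(n-3)^{n-4}}{n^{n-2}}$. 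Thus $\E[C_n]$ is simply this constant multiplied by the number of index triples appearing in the sum.

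Next I would count those triples. For each fixed $j$ the inner double sum runs exactly once over each unordered pair $\{i_1,i_2\}$ drawn from the $n-1$ vertices different from $j$, giving $\binom{n-1}{2}$ terms; summing over the $n$ values of $j$ produces $n\binom{n-1}{2} = \tfrac12 n(n-1)(n-2)$ triples in total. Multiplying by the common expectation immediately yields the first equality
\[
    \E[C_n] = \tfrac12\, n(n-1)(n-2)(n-3)^{n-4}\,\tfrac{1}{n^{n-2}}.
\]

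Finally, for the asymptotic form I would isolate the part of this expression that depends on $n$ through the large exponent. Writing $(n-3)^{n-4} = n^{n-4}(1-\tfrac3n)^{n-4}$ turns the fraction into $\frac{(n-3)^{n-4}}{n^{n-2}} = \frac{1}{n^2}(1-\tfrac3n)^{n-4}$, so that
\[
    \E[C_n] = \frac{(n-1)(n-2)}{2n}\Bigl(1-\tfrac3n\Bigr)^{n-4}.
\]
Here the rational prefactor is $\tfrac{n}{2}+O(1)$, and the decisive analytic input is the exponential limit $(1-\tfrac3n)^{n-4}\to e^{-3}$; combining the two gives a term growing linearly in $n$ plus a bounded correction. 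I expect the main obstacle to be upgrading this from an $o(n)$ statement to a genuine $O(1)$ error: for that I would expand $(n-4)\log(1-\tfrac3n)$ to second order in $\tfrac1n$ to obtain $(1-\tfrac3n)^{n-4} = e^{-3}\bigl(1+O(\tfrac1n)\bigr)$, and then multiply by the prefactor and collect, so that every contribution beyond the leading linear term is absorbed into $O(1)$.
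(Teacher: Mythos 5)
Your derivation of the exact formula is the same as the paper's: linearity of expectation applied to \eqref{num_cherry_sum}, Lemma~\ref{lem:expectation1} applied uniformly because the index constraints force $i_1,i_2,j$ to be pairwise distinct in every summand, and the count $n\binom{n-1}{2}=\tfrac12 n(n-1)(n-2)$ of summands. That part is correct and complete.

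The issue is in the asymptotics, and it is not a flaw in your computation but a mismatch with the statement you are trying to prove. You correctly reduce to $\E[C_n]=\frac{(n-1)(n-2)}{2n}\bigl(1-\tfrac3n\bigr)^{n-4}$ and correctly note that $\bigl(1-\tfrac3n\bigr)^{n-4}=e^{-3}\bigl(1+O(\tfrac1n)\bigr)$. But carrying this through gives $\E[C_n]=\frac{e^{-3}}{2}n+O(1)$, not $\frac12 n+O(1)$; these differ by $\frac{1-e^{-3}}{2}n$, which is of order $n$, so the second equality in the lemma is false as stated. Your write-up obscures this by reporting only ``a term growing linearly in $n$ plus a bounded correction'' without ever writing down the leading coefficient; a finished proof must either produce the coefficient $\frac12$ (which is impossible) or flag the discrepancy, and you should have done the latter. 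For what it is worth, the paper offers no justification for its asymptotic at all, and the error is harmless downstream: Lemma~\ref{lem:second_moment_Cn} carries the matching error (its leading coefficient is really $\frac{e^{-6}}{4}$), and only the ratios $\E[C_n^2]/\E[C_n]^2$ enter Lemma~\ref{lem:variance_Cn} and the Chebyshev argument, where the exponential factors cancel.
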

\begin{proof}
The number of $3$-tuples $(j, i_1, i_2) \in \{1, \ldots, n\}^3$ such that all entries are distinct is $n(n-1)(n-2)$. The further condition that $i_2 < i_1$ halves this number, so the expression in Equation \eqref{num_cherry_sum} has $\frac{n(n-1)(n-2)}{2}$ summands. Hence, by Lemma~\ref{lem:expectation1}:
\begin{align*}
    \E[C_n] &= \sum_{j=1}^n \sum_{\substack{i_1=1\\i_1\neq j}}^n \sum_{\substack{i_2=1\\i_2\neq j}}^{i_1 - 1} \E[\epsilon_{i_1, i_2, j}] = \sum_{j=1}^n \sum_{\substack{i_1=1\\i_1\neq j}}^n \sum_{\substack{i_2=1\\i_2\neq j}}^{i_1 - 1} \frac{(n-3)^{n-4}}{n^{n-2}}\\
    &= \frac{n(n-1)(n-2)}{2} \frac{(n-3)^{n-4}}{n^{n-2}}
\end{align*}
It remains to show that this is asymptotically $\frac{n}{2e^3} + O(1)$, i.e. we have to show that the following expression is bounded:
\begin{align}
    &\frac{n(n-1)(n-2)}{2} \frac{(n-3)^{n-4}}{n^{n-2}} - \frac{n}{2e^3} = \frac{n}{2}\left( (n-1)(n-2)\frac{(n-3)^{n-4}}{n^{n-2}} - e^{-3}\right) \nonumber\\
    &= \frac{n}{2} \left( \left(\frac{n-3}{n}\right)^{n-2} - e^{-3} + 3 \frac{(n-3)^{n-3}}{n^{n-2}} + 2 \frac{(n-3)^{n-4}}{n^{n-2}} \right) \nonumber\\
    &= \frac{1}{2} \left( n \left( \left(1 - \frac{3}{n}\right)^{n-2} - e^{-3} \right) + 3 \frac{(n-3)^{n-3}}{n^{n-3}} + 2 \frac{(n-3)^{n-4}}{n^{n-3}} \right) \label{eq:asymp_of_ECn}
\end{align}
The second and third summand in Term~\eqref{eq:asymp_of_ECn} converge to $3e^{-3}$ and $0$ respectively as $n \rightarrow \infty$. So we have to check whether the first summand is bounded. For large $n$ we have\footnote{This calculation can be made rigorous by explicitly writing out a full Taylor expansion of the logarithm and keeping track of the limit definition of the exponential function. As this is tedious and does not add any further insight, we leave out the details.}:
\begin{align*}
    &\left( 1 - \frac{3}{n} \right)^{n-2} - e^{-3}
     = \exp \left((n-2) \log \left(1-\frac{3}{n} \right) \right) - e^{-3}\\
    &\approx \exp \left( n \left( \frac{-3}{n} - \frac{9}{2n^2} \right) \right) - e^{-3}
    = e^{-3} e^{-\frac{9}{2n}} - e^{-3} \approx e^{-3} \left( 1- \frac{9}{2n} \right) - e^{-3} = -\frac{9e^{-3}}{2n}
\end{align*}
Hence $n \left( \left(1 - \frac{3}{n}\right)^{n-2} - e^{-3} \right)$ is bounded and thus $\frac{n(n-1)(n-2)}{2} \frac{(n-3)^{n-4}}{n^{n-2}} = \frac{n}{2e^3} + O(1)$ as claimed.
\end{proof}

We now want to calculate the variance of $C_n$. For this we need the second moment.

\begin{lem}\label{lem:second_moment_Cn}
The second moment of $C_n$ is
\[
    \E[C_n^2] = \left( \frac{1}{4} \frac{n!}{(n-6)!} (n-6)^{n-6} + n(n-1)(n-2)(n-3)^{n-4} \right) \frac{1}{n^{n-2}} = \frac{n^2}{4e^6} + O(n)
\]
\end{lem}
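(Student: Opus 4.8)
The plan is to square the representation \eqref{num_cherry_sum}, use linearity of expectation, and then evaluate every resulting term through Lemma~\ref{lem:expectation2}, so that the whole computation reduces to a counting problem. Abbreviating $C_n = \sum_c \epsilon_c$, where $c = (i_1,i_2,j)$ ranges over the $\tfrac{n(n-1)(n-2)}{2}$ admissible index triples (pairwise distinct, with $i_2 < i_1$), expansion gives
\[
    \E[C_n^2] = \sum_{c}\sum_{c'} \E[\epsilon_c\,\epsilon_{c'}].
\]
By Lemma~\ref{lem:expectation2} the summand vanishes unless the two triples are either vertex-disjoint (all six indices distinct) or coincide as cherry patterns, so only these two families of pairs $(c,c')$ need to be counted and weighted.

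I would first dispose of the coinciding pairs. Because both triples satisfy the ordering constraints $i_2 < i_1$ and $i_4 < i_3$, the condition $j_1 = j_2$ together with $\{i_1,i_2\} = \{i_3,i_4\}$ forces $c = c'$; and since each $\epsilon_c$ is a $\{0,1\}$-valued indicator we have $\epsilon_c^2 = \epsilon_c$, so this part of the sum is exactly $\sum_c \E[\epsilon_c]$. This is precisely the first-moment sum already evaluated in Lemma~\ref{lem:expectation_Cn}, so it contributes a term built from $(n-3)^{n-4}/n^{n-2}$ that is only of order $n$.

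Next comes the dominant, disjoint-pair term, whose summands each equal $(n-6)^{n-6}/n^{n-2}$ by Lemma~\ref{lem:expectation2}. Counting ordered pairs of vertex-disjoint cherry patterns is elementary: there are $n$ choices for the first centre, $\binom{n-1}{2}$ for its leaf pair, then $n-3$ for the second centre and $\binom{n-4}{2}$ for its leaf pair, that is,
\[
    n\binom{n-1}{2}(n-3)\binom{n-4}{2} = \frac14\,\frac{n!}{(n-6)!}
\]
such pairs. Multiplying by $(n-6)^{n-6}/n^{n-2}$ produces the first summand of the assertion, which is of order $n^2$ and hence the leading contribution.

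Adding the two contributions yields the closed form, after which a routine asymptotic expansion of the polynomial prefactors together with the ratios of the type $(n-c)^{n-c}/n^{n-2}$ gives the stated leading behaviour, the dominant disjoint-pair term being of order $n^2$ and the coinciding-pair term only $O(n)$ (this is exactly what the variance estimate in the next step will require). I expect the only real obstacle to be bookkeeping rather than ideas: one must respect the constraints $i_2<i_1$ and $i_4<i_3$ throughout the enumeration so that each geometric configuration is counted with the correct multiplicity, and one must rely on the ``otherwise $0$'' clause of Lemma~\ref{lem:expectation2} to discard all mixed configurations (two cherries sharing only a centre, sharing a single leaf, or a leaf of one being the centre of the other), which are geometrically impossible in a tree and hence contribute nothing.
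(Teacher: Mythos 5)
Your proposal is correct and follows essentially the same route as the paper: square the representation \eqref{num_cherry_sum}, use Lemma~\ref{lem:expectation2} to discard all mixed configurations, and count the surviving diagonal and vertex-disjoint pairs (your count $n\binom{n-1}{2}(n-3)\binom{n-4}{2}=\tfrac14\,n!/(n-6)!$ agrees with the paper's). One point deserves flagging. You observe, correctly, that the ordering constraints $i_2<i_1$ and $i_4<i_3$ force the ``coinciding'' case to be exactly the diagonal $c=c'$, so that this part of the sum equals $\sum_c\E[\epsilon_c]=\E[C_n]=\tfrac12 n(n-1)(n-2)(n-3)^{n-4}/n^{n-2}$. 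The paper instead inserts a factor $2$ in front of its diagonal sum to account for the sub-case $i_1=i_4$, $i_2=i_3$, which is vacuous under those same ordering constraints; consequently the second summand in the displayed closed form is $n(n-1)(n-2)(n-3)^{n-4}/n^{n-2}$, twice what your computation (which is the correct one) produces. So ``adding the two contributions'' reproduces the stated closed form only up to this factor of $2$ in the lower-order term; the discrepancy is of size $\E[C_n]=O(n)$ and is absorbed by the error term, so the asserted asymptotics $\tfrac14 n^2+O(n)$ and everything downstream (Lemma~\ref{lem:variance_Cn}, Theorem~\ref{two_cherries}) are unaffected. Apart from making that discrepancy explicit, there is nothing to add: your handling of the vanishing mixed terms and of the asymptotics of $(n-c)^{n-c}/n^{n-2}$ is exactly what the argument requires.
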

\begin{proof}
Let $T_n$ be a random tree on $n$ vertices. We first compute using Equation~\ref{num_cherry_sum}:
\begin{align*}
    \E[C_n^2] &= \sum_{j_1 = 1}^n \sum_{\substack{i_1=1\\i_1\neq j_1}}^n \sum_{\substack{i_2=1\\i_2 \neq j_1}}^{i_1-1} \sum_{j_2=1}^n \sum_{\substack{i_3=1\\i_3 \neq j_2}}^n \sum_{\substack{i_4=1\\i_4 \neq j_2}}^{i_3-1} \E[\epsilon_{i_1, i_2, j_1} \epsilon_{i_3, i_4, j_2}]
\end{align*}
To apply the formulas from Lemma~\ref{lem:expectation2} we split this sum into the two cases where $j_1, i_1, i_2, j_2, i_3, i_4$ are all different and where either $j_1 = j_2$ and $i_1 = i_3$, $i_2 = i_4$ or $j_1 = j_2$ and $i_1 = i_4$, $i_2 = i_3$ and $j_1, i_1, i_2$ are different.
\begin{align}
    \E[C_n^2] &= \sum_{j_1=1}^n \sum_{\substack{i_1=1\\i_1\neq j_1}}^n \sum_{\substack{i_2=1\\i_2\neq j_1}}^{i_1-1} \sum_{\substack{j_2=1\\j_2\neq j_1\\j_2\neq i_1\\j_2\neq i_2}}^n \sum_{\substack{i_3=1\\i_3\neq j_1\\i_3\neq i_1\\i_3\neq i_2\\i_3\neq j_2}}^n \sum_{\substack{i_4=1\\i_4\neq j_1\\i_4\neq i_1\\i_4\neq i_2\\i_4\neq j_2}}^{i_3-1} \E[\epsilon_{i_1, i_2, j_1} \epsilon_{i_3, i_4, j_2}] \label{eq:big_sum} \\
    &+ 2 \sum_{j_1=1}^n \sum_{\substack{i_1=1\\i_1\neq j_1}}^n \sum_{\substack{i_2=1\\i_2\neq j_1}}^{i_1-1} \E[\epsilon_{i_1, i_2, j_1}^2] \label{eq:small_sum}
\end{align}
The number of $6$-tuples $(j_1,i_1,i_2,j_2,i_3,i_4) \in \{1,\ldots,n\}^6$ such that all entries are different is $\frac{n!}{(n-6)!}$. Each of the further conditions that $i_2 < i_1$ and $i_4 < i_3$ halves this number. So the expression in Term~\eqref{eq:big_sum} has $\frac{1}{4}\frac{n!}{(n-6)!}$ summands. By a similar argument, the expression in Term~\eqref{eq:small_sum} has $\frac{n(n-1)(n-2)}{2}$ summands, therefore by Lemma~\ref{lem:expectation2} and analogous calculations as in the proof of Lemma~\ref{lem:expectation_Cn}:
\begin{align*}
    \E[C_n^2] &= \frac{1}{4}\frac{n!}{(n-6)!}\frac{(n-6)^{n-6}}{n^{n-2}} + n(n-1)(n-2) \frac{(n-3)^{n-4}}{n^{n-2}}\\
    &= \frac{n^2}{4e^6} + O(n)
\end{align*}
\end{proof}

\begin{lem}\label{lem:variance_Cn}
The following holds for $C_n$:

\begin{minipage}{.45\linewidth}
    \begin{flushleft}
        \begin{align*}
            (i)& \ \frac{\E[C_n^2]}{\E[C_n]^2} \xrightarrow{n \rightarrow \infty} 1\\
            (iii)& \ \frac{\Var[C_n]}{\E[C_n]^2} \xrightarrow{n \rightarrow \infty} 0
        \end{align*}
    \end{flushleft}
\end{minipage}
\hfill
\begin{minipage}{.45\linewidth}
    \begin{flushright}
         \begin{align*}
            (ii)& \ \frac{\E[(C_n-1)^2]}{\E[C_n-1]^2} \xrightarrow{n \rightarrow \infty} 1\\
            (iv)& \ \frac{\Var[C_n-1]}{\E[C_n-1]^2} \xrightarrow{n \rightarrow \infty} 0
        \end{align*}
    \end{flushright}
\end{minipage}

\end{lem}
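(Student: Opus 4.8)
The plan is to reduce all four statements to the two asymptotic expansions already established, namely $\E[C_n] = \tfrac12 n + O(1)$ from Lemma~\ref{lem:expectation_Cn} and $\E[C_n^2] = \tfrac14 n^2 + O(n)$ from Lemma~\ref{lem:second_moment_Cn}. The first step is to square the expectation, obtaining
\[
    \E[C_n]^2 = \left(\tfrac12 n + O(1)\right)^2 = \tfrac14 n^2 + O(n),
\]
so that the leading terms of $\E[C_n^2]$ and $\E[C_n]^2$ coincide. Part (i) then follows immediately: the quotient $\E[C_n^2]/\E[C_n]^2$ is a ratio of two quantities of the form $\tfrac14 n^2 + O(n)$, hence tends to $1$.

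For part (iii) I would use the identity $\Var[C_n] = \E[C_n^2] - \E[C_n]^2$. Subtracting the two expansions above, the $\tfrac14 n^2$ terms cancel and we are left with $\Var[C_n] = O(n)$. Dividing by $\E[C_n]^2 = \tfrac14 n^2 + O(n)$, which is of order $n^2$, yields a quantity of order $O(1/n)$, tending to $0$. The only point requiring care is precisely this cancellation of the leading $\tfrac14 n^2$ terms, which is why the second moment had to be computed to the accuracy $\tfrac14 n^2 + O(n)$ rather than merely up to leading order.

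The statements for $C_n - 1$ in parts (ii) and (iv) require no new computation. Since variance is invariant under translation, $\Var[C_n - 1] = \Var[C_n] = O(n)$. Likewise $\E[C_n - 1] = \E[C_n] - 1 = \tfrac12 n + O(1)$, so subtracting $1$ is absorbed into the existing $O(1)$ error term and $\E[C_n-1]^2 = \tfrac14 n^2 + O(n)$ exactly as before; part (iv) is then identical to part (iii). For part (ii) I would write
\[
    \E[(C_n-1)^2] = \Var[C_n-1] + \E[C_n-1]^2 = \tfrac14 n^2 + O(n)
\]
and conclude, as in part (i), that the quotient of two such expressions tends to $1$.

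I do not anticipate a genuine obstacle: once the expansions of Lemmas~\ref{lem:expectation_Cn} and~\ref{lem:second_moment_Cn} are in hand, all four statements are bookkeeping with the $O$-notation. The mild subtlety worth flagging is that parts (i) and (ii) assert convergence to $1$ of a ratio whose numerator and denominator both grow like $n^2$, so one must retain the $O(n)$ corrections to verify they do not affect the limit, whereas parts (iii) and (iv) rely on the exact cancellation of the $\tfrac14 n^2$ leading terms in the variance.
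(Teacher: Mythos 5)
Your proposal is correct and rests on exactly the same ingredients as the paper's proof, namely the expansions $\E[C_n]=\tfrac12 n+O(1)$ and $\E[C_n^2]=\tfrac14 n^2+O(n)$ together with routine $O$-notation bookkeeping; the only (immaterial) difference is the direction of derivation, since the paper proves (i) and (ii) directly and deduces (iii), (iv) from the identity $\Var[X]/\E[X]^2=\E[X^2]/\E[X]^2-1$, whereas you establish the variance statements directly and recover (ii) from (iv).
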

\begin{proof}
\begin{enumerate}[(i)]
    \item Using Lemma~\ref{lem:expectation_Cn} and Lemma~\ref{lem:second_moment_Cn} we calculate:
    \begin{align*}
        \frac{\E[C_n^2]}{\E[C_n]^2} &= \frac{\frac{n^2}{4e^6} + O(n)}{\left( \frac{n}{2e^3}n + O(1) \right)^2} = \frac{\frac{1}{4e^6} n^2 + O(n)}{\frac{1}{4e^6} n^2 + O(n)} \xrightarrow{n \rightarrow \infty} 1
    \end{align*}
    \item Again, by using Lemma~\ref{lem:expectation_Cn} and Lemma~\ref{lem:second_moment_Cn} we get:
    \begin{align*}
        \frac{\E[(C_n-1)^2]}{\E[C_n-1]^2} &= \frac{\E[C_n^2] - 2\E[C_n] + 1}{\E[C_n]^2 - 2\E[C_n] + 1} = \frac{ \frac{n^2}{4e^6} + O(n) - \frac{n}{e^3} + O(1) + 1 }{ \left( \frac{n}{2e^3} + O(1) \right)^2 - \frac{n}{e^3} + O(1) + 1 } \\
        &= \frac{\frac{1}{4e^6} n^2 + O(n)}{\frac{1}{4e^6} n^2 + O(n)} \xrightarrow{n \rightarrow \infty} 1
    \end{align*}
    \item  + (iv) As $\frac{\Var[X]}{\E[X]^2} = \frac{\E[X^2]}{\E[X]^2} - 1$, we obtain (iii) and (iv) from (i) and (ii) respectively.
\end{enumerate}
\end{proof}

\begin{thm}\label{two_cherries}
Almost all trees have at least two cherries, i.e.
\[
    \lim_{n \rightarrow \infty} \PP[C_n \geq 2] = 1
\]
\end{thm}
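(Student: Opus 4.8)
The plan is to prove this with the classical second moment method, but applied to the \emph{shifted} variable $C_n-1$ rather than to $C_n$ itself. Since $C_n$ takes values in the non-negative integers, the event $\{C_n \geq 2\}$ fails exactly when $C_n \in \{0,1\}$, that is, when $C_n - 1 \leq 0$. Hence it suffices to show that $\PP[C_n - 1 \leq 0] \to 0$ as $n \to \infty$, and then pass to the complement.

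First I would record, using Lemma~\ref{lem:expectation_Cn}, that $\E[C_n - 1] = \E[C_n] - 1 = \tfrac{1}{2}n + O(1)$, so in particular $\E[C_n-1] > 0$ for all sufficiently large $n$ and $\E[C_n-1] \to \infty$. On the event $\{C_n - 1 \leq 0\}$ we then have $(C_n - 1) - \E[C_n-1] \leq -\E[C_n-1] < 0$, and therefore $\lvert (C_n-1) - \E[C_n-1]\rvert \geq \E[C_n-1]$. This is the step that lets a lower-tail event be rewritten as a two-sided deviation, which is what Chebyshev's inequality controls.

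Applying Chebyshev's inequality to $C_n - 1$ with deviation threshold $\E[C_n-1]$ then yields
\[
    \PP[C_n \leq 1] = \PP[C_n - 1 \leq 0] \leq \PP\bigl[\lvert (C_n-1) - \E[C_n-1]\rvert \geq \E[C_n-1]\bigr] \leq \frac{\Var[C_n-1]}{\E[C_n-1]^2}.
\]
By part (iv) of Lemma~\ref{lem:variance_Cn}, the right-hand side tends to $0$ as $n \to \infty$, so $\PP[C_n \geq 2] = 1 - \PP[C_n \leq 1] \to 1$, which is the assertion.

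The one genuinely delicate point is the choice to center at $C_n - 1$ instead of $C_n$. The naive route of bounding $\PP[C_n = 0]$ via $\Var[C_n]/\E[C_n]^2 \to 0$ (part (iii)) only reproves the existence of a \emph{single} cherry, i.e.\ Theorem~\ref{thm:er1}, and says nothing about ruling out the value $C_n = 1$. Shifting by one is precisely what allows Chebyshev to absorb both bad values $0$ and $1$ into a single lower-tail estimate, and this is exactly why the variance ratio in Lemma~\ref{lem:variance_Cn} was prepared for $C_n - 1$. Everything beyond this observation is a routine invocation of a standard concentration inequality together with the moment asymptotics already established.
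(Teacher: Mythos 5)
Your proof is correct and follows essentially the same route as the paper: Chebyshev's inequality applied to the shifted variable $C_n - 1$, fed by the moment asymptotics of Lemma~\ref{lem:expectation_Cn} and Lemma~\ref{lem:variance_Cn}. The only (cosmetic) difference is that you absorb both bad values $0$ and $1$ into the single lower-tail event $\{C_n - 1 \le 0\}$ using only part (iv), whereas the paper bounds $\PP[C_n = 0]$ and $\PP[C_n = 1]$ by two separate applications of Chebyshev, to $C_n$ and to $C_n - 1$ respectively.
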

\begin{proof}
Using Chebyshev's inequality and Lemma~\ref{lem:variance_Cn} we have that
\begin{equation*}
    \PP[C_n = 0] \leq \PP\big[\lvert C_n - \E[C_n] \rvert \geq \E[C_n]\big] \leq \frac{\Var[C_n]}{\E[C_n]^2} \xrightarrow{n \rightarrow \infty} 0
\end{equation*}
as well as
\begin{align*}
    \PP[C_n = 1] = \PP[C_n - 1 = 0] \leq \frac{\Var[C_n - 1]}{\E[C_n - 1]^2} \xrightarrow{n \rightarrow \infty} 0
\end{align*}
So $\PP[C_n \geq 2] = 1 - \PP[C_n = 0] - \PP[C_n = 1] \xrightarrow{n \rightarrow \infty} 1$ which completes the proof.
\end{proof}

\section{Asymptotics of (quantum) symmetries of graphs and trees}

From Theorem~\ref{two_cherries} and the following criterion from \cite{schmidt2018foldedcube}, we may derive that almost all trees have quantum symmetry.

Let $\Gamma = (V,E)$ be a graph and let $\sigma : V \rightarrow V$ be an automorphism of $\Gamma$. The set
\begin{equation*}
    \{ v \in V \mid \sigma(v) \neq v \}
\end{equation*}
is called the \textbf{support} of $\sigma$.

\begin{prop}\cite[Theorem~2.2]{schmidt2018foldedcube}
\label{schmidt_criterion}
Let $\Gamma$ be a graph. If there exist two non-trivial automorphisms $\sigma, \tau$ of $\Gamma$ with disjoint support, then $\Gamma$ has quantum symmetry.
\end{prop}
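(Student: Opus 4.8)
The plan is to prove non-commutativity of $C(\QAut(\Gamma))$ directly, by producing a $*$-homomorphism from it onto a non-commutative $C^*$-algebra. Recall that an automorphism is the same as a permutation matrix commuting with the adjacency matrix $A$; thus $\sigma, \tau$ give permutation matrices $P_\sigma, P_\tau \in M_n(\C)$ with $P_\sigma A = A P_\sigma$ and $P_\tau A = A P_\tau$. Write $m, m'$ for their (finite) orders. The disjoint-support hypothesis means that $V$ decomposes as $F \sqcup S_\sigma \sqcup S_\tau$, where $S_\sigma, S_\tau$ are the two supports and $F$ is the common fixed-point set; in particular every $P_{\sigma^\ell}$ is the identity outside the $S_\sigma \times S_\sigma$ block, and likewise for $\tau$ outside the $S_\tau \times S_\tau$ block.

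The idea is to quantize $\sigma$ and $\tau$ over a non-commutative coefficient algebra. I would take $B := C^*(\Z_m) *_{\mathrm{red}} C^*(\Z_{m'})$, the reduced free product with respect to the canonical traces, with canonical generators $g, h$ and their spectral projections $e_\ell = \frac1m \sum_{k=0}^{m-1}\omega^{-k\ell} g^k$ (for $\omega = e^{2\pi i/m}$) and analogously $f_\ell$ built from $h$; these are orthogonal projections summing to $1$ inside the respective free factors. Then I would define matrices over $B$ by
\[
  u^{(\sigma)} = \sum_{\ell=0}^{m-1} P_{\sigma^\ell}\otimes e_\ell, \qquad u^{(\tau)} = \sum_{\ell=0}^{m'-1} P_{\tau^\ell}\otimes f_\ell,
\]
and set $v := u^{(\sigma)} u^{(\tau)}$. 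First I would check that $u^{(\sigma)}$ is a magic unitary: its $(i,j)$-entry is $\sum_{\ell:\,\sigma^\ell(i)=j} e_\ell$, a sum of mutually orthogonal projections, and its rows and columns sum to $\sum_\ell e_\ell = 1$. Since $u^{(\sigma)}$ is the identity off the $S_\sigma$-block and $u^{(\tau)}$ off the $S_\tau$-block, and these blocks are disjoint, the product $v$ is block-diagonal and hence again a magic unitary; moreover $v$ commutes with $A$ because each $P_{\sigma^\ell}$ and $P_{\tau^\ell}$ does while the coefficients $e_\ell, f_\ell$ are scalar with respect to the matrix indices. By the universal property of $C(S_n^+)$ together with the relation $uA = Au$, the assignment $u_{ij}\mapsto v_{ij}$ then extends to a $*$-homomorphism $\varphi : C(\QAut(\Gamma)) \to B$.

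It remains to see that the image of $\varphi$ is non-commutative. Pick $i \in S_\sigma$; as its $\sigma$-orbit has size at least $2$, the entry $v_{i,\sigma(i)} = \sum_{\ell:\,\sigma^\ell(i)=\sigma(i)} e_\ell$ is a projection containing $e_1$ but not $e_0$, hence neither $0$ nor $1$, so a non-scalar element of the free factor $C^*(\Z_m)$. Symmetrically, for $a \in S_\tau$ the entry $v_{a,\tau(a)}$ is a non-scalar projection in $C^*(\Z_{m'})$. By the standard fact that non-scalar elements lying in different free factors of a reduced free product never commute (free independence being ``maximally non-commutative''), these two entries do not commute in $B$. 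Hence $\varphi(u_{i,\sigma(i)})$ and $\varphi(u_{a,\tau(a)})$ do not commute, so $C(\QAut(\Gamma))$ is non-commutative and $\Gamma$ has quantum symmetry.

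I expect the main obstacle to be the bookkeeping that certifies $v$ as a genuine magic unitary in the general (non-involutive) case — one must track orbit sizes to confirm that both the diagonal entries and the ``moved'' entries are honest projections — together with a clean invocation of the free-probability input. If one only needs involutions, as in the cherry application where $\sigma, \tau$ are transpositions, everything simplifies: one may take $B = M_2(\C)$ with $p, q$ two non-commuting rank-one projections and $u^{(\sigma)} = I_n \otimes p + P_\sigma\otimes(1-p)$, reading off the non-commuting entries $1-p$ and $1-q$ directly and avoiding free products altogether.
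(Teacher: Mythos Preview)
The paper does not supply its own proof of this proposition; it is imported verbatim as Theorem~2.2 of \cite{schmidt2018foldedcube} and used as a black box. So there is no ``paper's proof'' to compare against here, only the cited source.

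That said, your argument is correct and is in fact the standard way this result is proved (and is essentially what appears in the cited reference). The construction $u^{(\sigma)}=\sum_\ell P_{\sigma^\ell}\otimes e_\ell$ does yield a magic unitary commuting with $A$, the disjoint-support hypothesis makes the product $v=u^{(\sigma)}u^{(\tau)}$ block-diagonal and hence again a magic unitary commuting with $A$, and the universal property produces the desired $*$-homomorphism. Two small remarks. First, working with the \emph{full} free product $C^*(\Z_m)\ast C^*(\Z_{m'})\cong C^*(\Z_m\ast\Z_{m'})$ rather than the reduced one would let you avoid the free-probability input altogether: the group $\Z_m\ast\Z_{m'}$ is non-abelian for $m,m'\ge 2$, and one checks directly (e.g.\ via the regular representation) that a non-trivial projection in one factor fails to commute with a non-trivial projection in the other. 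Second, if you do keep the reduced version, the ``standard fact'' you invoke is the computation $\|[P,Q]\|_2^2 = 2\,\tau(P)\bigl(1-\tau(P)\bigr)\,\tau(Q)\bigl(1-\tau(Q)\bigr)>0$ for free non-trivial projections $P,Q$; citing this explicitly would tighten the write-up. Your closing remark about the involutive special case with $B=M_2(\C)$ is exactly right and is all that is needed for the cherry application.
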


This also makes the importance of cherries clear. If a graph contains a cherry, it has a non-trivial automorphism of order two which swaps the two vertices of degree one in the cherry. So the graph has symmetry. If a graph has two cherries, it has two non-trivial disjoint automorphisms of order two. So by the above proposition, it has quantum symmetry. From this, we can conclude our main result, building on Theorem~\ref{two_cherries}.

\begin{thm}\label{thm:trees_qsym}
    Almost all trees have quantum symmetry.
\end{thm}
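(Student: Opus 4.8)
The plan is to combine the probabilistic input of Theorem~\ref{two_cherries} with the structural criterion of Proposition~\ref{schmidt_criterion}, the bridge being the elementary observation already noted: a cherry produces a non-trivial automorphism, namely the transposition that swaps its two degree-one vertices and fixes all others. Thus, if a tree contains \emph{two} cherries, I would like to extract two such transpositions and verify that their supports are disjoint, whereupon the criterion delivers quantum symmetry at once.

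Concretely, I would fix $n$ large (it will suffice to take $n \geq 5$) and suppose $T_n$ contains two distinct cherries, say $(u_1, u_2, v)$ and $(w_1, w_2, z)$. To these I associate the order-two automorphisms $\sigma$ and $\tau$ swapping $u_1 \leftrightarrow u_2$ and $w_1 \leftrightarrow w_2$ respectively, with supports $\{u_1, u_2\}$ and $\{w_1, w_2\}$.

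The step I expect to require the most care is showing that these supports are disjoint, and this is exactly where the degree conditions from the definition of a cherry are needed. First, the two centres must satisfy $v \neq z$: by condition (iv) a centre has degree exactly $3$, yet in a connected tree on $n \geq 5$ vertices at most two of its three neighbours can be leaves, since the third neighbour is forced to carry the link to the remaining $n - 4 \geq 1$ vertices. Hence each vertex is the centre of at most one cherry, and distinct cherries have distinct centres. Second, because every cherry-leaf has degree $1$ and therefore a unique neighbour, a vertex cannot be a leaf of both the cherry at $v$ and the cherry at $z$ when $v \neq z$; so the two leaf-pairs are disjoint, and the supports of $\sigma$ and $\tau$ are disjoint as well.

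With disjointness in hand, Proposition~\ref{schmidt_criterion} shows that every tree with $C_n \geq 2$ has quantum symmetry, giving
\[
    \PP[\,T_n \text{ has quantum symmetry}\,] \geq \PP[C_n \geq 2]
\]
for $n \geq 5$. Passing to the limit $n \to \infty$ and applying Theorem~\ref{two_cherries} then yields the claim.
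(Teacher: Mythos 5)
Your proposal is correct and follows exactly the paper's route: Theorem~\ref{two_cherries} supplies two cherries almost surely, each cherry yields the order-two automorphism swapping its two leaves, and Proposition~\ref{schmidt_criterion} applied to these two automorphisms gives quantum symmetry. The only difference is that you spell out the disjointness of the supports (distinct centres for $n \geq 5$, hence disjoint leaf-pairs), which the paper asserts without proof; your verification is accurate.
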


For the convenience of the reader, let us briefly describe the arguments of \cite{lupini2017nonlocal} for the non-existence of quantum symmetry for graphs.

A subset $\mathcal{A} \subseteq M_n(\C)$ is called a \textbf{coherent algebra} if it is a selfadjoint unital subalgebra of $M_n(\C)$ both with respect to ordinary matrix multiplication as well as with respect to entrywise matrix multiplication (Schur product).

A class of examples of coherent algebras can be obtained from group actions:
Let $G$ be a group acting on a finite set $X$ with $n$ elements and let $R_1, \ldots, R_s$ be the orbits of the induced action of $G$ on $X \times X$ (these are sometimes called \textit{orbitals}). For each $R_i$ ($i \in \{ 1, \ldots, s\}$) we define its characteristic matrix $A^{(i)}$ as
\[
    A^{(i)}_{x y} \coloneqq \begin{cases}
                                1, &\textrm{if } (x, y) \in R_i\\
                                0, &\textrm{otherwise.}
                            \end{cases}
\]
Then the linear span of these matrices is a coherent algebra.

One can also associate a coherent algebra to a graph $\Gamma$ by considering the coherent algebra generated by its adjacency matrix, i.e.~the intersection of all coherent algebras containing the adjacency matrix of $\Gamma$. This is then called the \textbf{coherent algebra of $\Gamma$} and denoted by $\mathcal{CA}(\Gamma)$.

It is a well-known fact that the coherent algebra of a graph provides a sufficient criterion for a graph to be asymmetric:

\begin{prop}\label{ca_aut}
Let $\Gamma = (V,E)$ be a graph on $n$ vertices. Then we have:
\[
    \mathcal{CA}(\Gamma) = M_n(\C) \Rightarrow \Aut(\Gamma) = \{ \id \}
\]
\end{prop}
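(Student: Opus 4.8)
The plan is to show that the permutation matrix of every automorphism of $\Gamma$ lies in the commutant of the \emph{entire} coherent algebra $\mathcal{CA}(\Gamma)$; once this is established, the hypothesis $\mathcal{CA}(\Gamma) = M_n(\C)$ forces that permutation matrix to be central in $M_n(\C)$, hence a scalar, hence the identity. The real task is therefore to manufacture, for each $\sigma \in \Aut(\Gamma)$, a \emph{coherent} algebra that contains $A$ and commutes with $\sigma$; by the minimality of $\mathcal{CA}(\Gamma)$ among coherent algebras containing $A$, such an algebra must contain $\mathcal{CA}(\Gamma)$.

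Fix $\sigma \in \Aut(\Gamma)$ and identify it with its permutation matrix. First I would consider the commutant $\mathcal{B}_\sigma \coloneqq \{ M \in M_n(\C) \mid \sigma M = M \sigma \}$. A short index computation shows that $M \in \mathcal{B}_\sigma$ if and only if $M_{ij} = M_{\sigma(i)\sigma(j)}$ for all $i,j$, i.e.\ $M$ is invariant under the diagonal action of $\sigma$ on matrix entries. From this description it is immediate that $\mathcal{B}_\sigma$ is a unital selfadjoint subalgebra of $M_n(\C)$ under ordinary matrix multiplication (these properties hold for the commutant of any unitary, and $\sigma$ is orthogonal). The crucial extra observation is that $\mathcal{B}_\sigma$ is also closed under the Schur product: if $M_{ij} = M_{\sigma(i)\sigma(j)}$ and $N_{ij} = N_{\sigma(i)\sigma(j)}$, then $(M \circ N)_{ij} = M_{ij}N_{ij} = M_{\sigma(i)\sigma(j)}N_{\sigma(i)\sigma(j)} = (M \circ N)_{\sigma(i)\sigma(j)}$. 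Hence $\mathcal{B}_\sigma$ is a coherent algebra.

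Since $\sigma$ is an automorphism we have $\sigma A = A \sigma$, so $A \in \mathcal{B}_\sigma$. Thus $\mathcal{B}_\sigma$ is a coherent algebra containing $A$, and by minimality $\mathcal{CA}(\Gamma) \subseteq \mathcal{B}_\sigma$; equivalently, $\sigma$ commutes with every element of $\mathcal{CA}(\Gamma)$. Invoking now the hypothesis $\mathcal{CA}(\Gamma) = M_n(\C)$, the matrix $\sigma$ commutes with all of $M_n(\C)$ and is therefore a scalar multiple of the identity. As $\sigma$ is a permutation matrix, the only admissible scalar is $1$, so $\sigma = \id$. Since $\sigma \in \Aut(\Gamma)$ was arbitrary, this yields $\Aut(\Gamma) = \{\id\}$.

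The main obstacle is precisely the step establishing that $\mathcal{B}_\sigma$ is closed under the Schur product: closure under matrix multiplication, adjoints and the unit is automatic for any commutant, but Schur-closure is a genuinely special feature here, and it is exactly what upgrades $\mathcal{B}_\sigma$ from a mere matrix algebra to a coherent algebra. It rests on the entrywise-invariance reformulation $M_{ij} = M_{\sigma(i)\sigma(j)}$, which is what ties the algebraic commutation relation to the underlying combinatorial symmetry; carefully verifying this reformulation, with the correct convention for the permutation matrix, is the one computation I would be sure to pin down.
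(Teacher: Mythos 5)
Your argument is correct and complete: the paper itself states Proposition~\ref{ca_aut} without proof, citing it as well known, and your commutant argument is the standard way to establish it. The key points all check out — the entrywise characterization $M_{ij}=M_{\sigma(i)\sigma(j)}$ of the commutant of a permutation matrix, the resulting Schur-closure (and the fact that the all-ones Schur unit commutes with $\sigma$), membership of $A$ via $\sigma A = A\sigma$, and the minimality of $\mathcal{CA}(\Gamma)$ among coherent algebras containing $A$ — so there is nothing to add.
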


This is useful from a computational point of view since the coherent algebra of a graph can be computed in polynomial time using the two-dimensional Weisfeiler-Leman algorithm (for details on this see \cite{fuerer_WL}), whereas it is generally hard to compute the automorphism group of a graph or even to check whether or not it is trivial.

One of the key insights of Lupini, Man\v{c}inska and Roberson was that we can strengthen Proposition~\ref{ca_aut} to the following.

\begin{prop}\cite{lupini2017nonlocal}\label{ca_qaut}
Let $\Gamma = (V,E)$ be a graph on $n$ vertices. Then we have:
\[
    \mathcal{CA}(\Gamma) = M_n(\C) \Rightarrow \QAut(\Gamma) = \{ \id \}
\]
\end{prop}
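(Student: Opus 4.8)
The plan is to show that the set of matrices commuting with the fundamental magic unitary of $\QAut(\Gamma)$ is itself a coherent algebra containing the adjacency matrix $A$; it therefore contains $\mathcal{CA}(\Gamma)$, and the hypothesis $\mathcal{CA}(\Gamma)=M_n(\C)$ then forces this commutant to be all of $M_n(\C)$, which collapses the magic unitary to the identity matrix.

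Let $u=(u_{ij})$ be the fundamental magic unitary generating $C(\QAut(\Gamma))$ and set
\[
    \mathcal{I} \coloneqq \{ M \in M_n(\C) \mid uM = Mu \},
\]
where $M$ is read as $M\otimes 1$ inside $M_n(\C)\otimes C(\QAut(\Gamma))$. First I would record the elementary closure properties of $\mathcal{I}$: it is a subalgebra of $M_n(\C)$ for ordinary matrix multiplication (if $M,N$ commute with $u$ then so does $MN$), it is unital for both the ordinary and the Schur product since it contains $I$ and, by the row and column sum relations, the all-ones matrix $J$, and it contains $A$ by the defining relation $uA=Au$ of $\QAut(\Gamma)$. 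Self-adjointness needs a short argument: using that each $u_{ij}$ is a self-adjoint projection and that the magic unitary is orthogonal, $u^t=u^{-1}$ (projections summing to $1$ along a row or column are pairwise orthogonal), one checks that conjugating the relation $uM=Mu$ entrywise yields $\overline M\in\mathcal{I}$ and transposing it yields $M^t\in\mathcal{I}$, hence $M^*\in\mathcal{I}$.

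The crucial step, carrying the entire quantum content, is that $\mathcal{I}$ is closed under the Schur product $\circ$. Here I would introduce the multiplication map $m:\C^n\otimes\C^n\to\C^n$, $e_i\otimes e_j\mapsto\delta_{ij}e_i$, and its adjoint $m^*:e_i\mapsto e_i\otimes e_i$. A direct computation shows that the defining relations of a magic unitary (orthogonality of the projections $u_{ij}$ along each row and column) are exactly equivalent to $m$ and $m^*$ being intertwiners between the tensor square $u\otimes u$ and $u$. Since $M\otimes N$ intertwines $u\otimes u$ whenever $M,N\in\mathcal{I}$, the composite $m(M\otimes N)m^*$ intertwines $u$ with itself; evaluating it on matrix units gives $(m(M\otimes N)m^*)_{ij}=M_{ij}N_{ij}$, that is, exactly $M\circ N$. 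Thus $M\circ N\in\mathcal{I}$, and combining this with the previous paragraph shows that $\mathcal{I}$ is a coherent algebra containing $A$, whence by minimality $\mathcal{CA}(\Gamma)\subseteq\mathcal{I}$.

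Finally I would invoke the hypothesis $\mathcal{CA}(\Gamma)=M_n(\C)$, which forces $\mathcal{I}=M_n(\C)$; in particular $u$ commutes with every matrix unit $e_{kl}$. Writing $ue_{kl}=e_{kl}u$ out entrywise gives $u_{ik}\delta_{jl}=\delta_{ik}u_{lj}$ for all $i,j,k,l$, and comparing diagonal with off-diagonal entries yields $u_{ij}=\delta_{ij}p$ for a single projection $p$; the row-sum relation $\sum_j u_{ij}=1$ then forces $p=1$. Hence $u_{ij}=\delta_{ij}1$, so $C(\QAut(\Gamma))=\C$ and $\QAut(\Gamma)=\{\id\}$. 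I expect the main obstacle to be the Schur-product closure: this is precisely the point where one must use that $u$ is a quantum permutation matrix rather than an arbitrary commuting unitary, through the intertwining properties of $m$ and $m^*$, whereas the ordinary-product closure and the final collapse are routine.
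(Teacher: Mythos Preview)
The paper does not actually prove this proposition; it merely states it with a citation to \cite{lupini2017nonlocal} and uses it as a black box. So there is no ``paper's own proof'' to compare against, only the cited reference.

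That said, your argument is correct and is essentially the one given in the cited source. The observation that the scalar commutant
\[
    \mathcal{I}=\{M\in M_n(\C)\mid uM=Mu\}
\]
of a magic unitary $u$ is a coherent algebra is precisely the content of the ``quantum orbital algebra'' considered by Lupini, Man\v{c}inska and Roberson: closure under ordinary products, presence of $I$, $J$ and $A$, and self-adjointness are routine, while Schur-product closure is exactly the step that uses the magic-unitary relations via the intertwining properties of $m$ and $m^*$, as you identify. The conclusion $\mathcal{CA}(\Gamma)\subseteq\mathcal{I}$ and the collapse $u_{ij}=\delta_{ij}1$ when $\mathcal{I}=M_n(\C)$ are then immediate. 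Your write-up could be tightened slightly (for instance, once $u$ commutes with every $e_{kl}$ you may simply set $j=l$ in $u_{ik}\delta_{jl}=\delta_{ik}u_{lj}$ to get $u_{ik}=\delta_{ik}u_{ll}$ directly), but there is no gap.
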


Now, Babai and Kucera proved  that almost all graphs have maximal coherent algebra.

\begin{prop}\cite{babai1979}\label{almost_all_ca}
Almost all graphs $\Gamma$ satisfy $\mathcal{CA}(\Gamma) = M_n(\C)$. 
\end{prop}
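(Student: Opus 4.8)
The plan is to establish the Babai--Ku\v{c}era theorem by reducing the assertion $\mathcal{CA}(\Gamma) = M_n(\C)$ to a single combinatorial property of the uniform random graph $\Gamma = G(n,\tfrac12)$ (each of the $\binom n2$ possible edges present independently with probability $\tfrac12$, which is exactly the uniform measure on labelled graphs on $n$ vertices), namely that its stable colour-refinement partition is discrete.

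First I would record the algebraic reduction. A coherent algebra $\mathcal A$ carries a canonical linear basis of $0$--$1$ matrices partitioning $V\times V$ into classes, the diagonal classes being its \emph{fibres}, and $\dim \mathcal A = \#\{\text{classes}\}$; hence $\mathcal A = M_n(\C)$ iff every class is a single ordered pair, i.e. the configuration is discrete. The key observation is that \emph{if all fibres are singletons, the configuration is automatically discrete}: in a coherent configuration each off-diagonal class has support inside a product $F_a\times F_b$ of two fibres, and $F_a=\{a\}$, $F_b=\{b\}$ force that class to equal $\{(a,b)\}$. So it suffices to make every fibre of $\mathcal{CA}(\Gamma)$ a singleton. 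I would then note that the fibres of $\mathcal{CA}(\Gamma)$ refine the partition produced by naive vertex classification ($1$-dimensional colour refinement): degrees and all iterated refined degrees are diagonal data of matrices such as $AJ$, $A^kJ$ and their products, all of which lie in $\mathcal{CA}(\Gamma)$, so two vertices sharing a fibre receive the same stable colour. Consequently, \emph{if colour refinement assigns pairwise distinct colours to all vertices, then $\mathcal{CA}(\Gamma) = M_n(\C)$}, and the whole problem collapses to one probabilistic statement.

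The probabilistic heart, following Babai--Erd\H{o}s--Selkow, is a fingerprinting argument. Set $k = \lceil 3\log_2 n\rceil$ and let $S$ be the set of the $k$ vertices of largest degree. I would first show that with probability tending to $1$ the $k$ largest degrees are pairwise distinct and strictly exceed every other degree; then each vertex of $S$ is a singleton colour class already after the degree round and stays individualised. One further refinement round colours every remaining vertex $v$ by, among other data, the multiset of its neighbours' colours, which records exactly the pattern $\big(\mathbf 1[v\sim s]\big)_{s\in S}\in\{0,1\}^k$. To finish I would show that with probability tending to $1$ these length-$k$ patterns are pairwise distinct: for a fixed pair $u\neq v$ the collision probability is about $2^{-k}=n^{-3}$, and the exponent $3$ is chosen precisely so that the union bound over the fewer than $n^2$ pairs gives $n^2\cdot n^{-3}=n^{-1}\to 0$. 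Distinct patterns individualise all non-anchor vertices, the anchors are separated by their unique degrees, and so the stable partition is discrete.

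The main obstacle is the dependence between the anchor set $S$ and the patterns used to distinguish the other vertices: $S$ is determined by the full degree sequence, which involves the very edges $\{u,s\}$, $\{v,s\}$ making up the patterns, so $\PP[\text{collision}]\approx 2^{-k}$ is not immediate, and a naive union bound over the $\binom nk = n^{\Theta(\log n)}$ possible anchor sets is hopelessly lossy. I would resolve this by a two-stage exposure of the randomness: first reveal enough to pin down $S$ and its near-extremal degrees, using that the gap between the $k$-th largest degree and the bulk is $\gg 1$, so $S$ is insensitive to the few edges incident to any fixed pair $u,v$; then expose the edges from $\{u,v\}$ to $S$, which conditionally remain close to fair independent coin flips, restoring the bound $2^{-k}(1+o(1))$ uniformly in the pair. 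Controlling the top order statistics of the dependent degree sequence and the conditional edge distribution is the only genuinely delicate part; everything else is the algebraic reduction above together with routine Chernoff and union-bound estimates.
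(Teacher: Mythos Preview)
The paper does not actually prove this proposition; it merely states it with a citation to Babai and Ku\v{c}era and uses it as a black box. So there is no ``paper's own proof'' to compare against. Your proposal is a faithful outline of the standard argument (essentially the Babai--Erd\H{o}s--Selkow canonical-labelling method combined with the algebraic reduction via fibres of a coherent configuration), and the reduction you give---discrete fibres force a discrete configuration, and the colour-refinement partition is at least as coarse as the fibre partition of $\mathcal{CA}(\Gamma)$---is correct. The dependence issue you flag between the choice of the anchor set $S$ and the edges from a fixed pair into $S$ is real; your two-stage exposure sketch is one workable resolution, though the original papers handle it somewhat differently by directly controlling the top order statistics of the degree sequence. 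In short, your write-up goes well beyond what the paper itself supplies.
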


Summarizing all of the above, we obtain:

\begin{thm} \ \newline \vspace*{-\baselineskip} \enlargethispage{\baselineskip}
	\begin{enumerate}[(i)]
		\item \cite{Erdos1963} Almost all graphs have no symmetry.
		\item \cite{lupini2017nonlocal} Almost all graphs have no quantum symmetry.
		\item \cite{Erdos1963} Almost all trees have symmetry.
		\item Almost all trees have quantum symmetry.
	\end{enumerate}
\end{thm}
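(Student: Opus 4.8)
The statement combines four results, of which (i), (ii) and (iii) are quoted from the literature, so the only thing left to establish is part (iv), namely Theorem~\ref{thm:trees_qsym}. The plan is to feed the output of Theorem~\ref{two_cherries} into the criterion of Proposition~\ref{schmidt_criterion}. Concretely, I would first reduce the claim to the following deterministic statement: \emph{every} tree containing at least two cherries has quantum symmetry. Once this is shown, the probabilistic conclusion is immediate, since Theorem~\ref{two_cherries} gives $\PP[C_n \geq 2] \to 1$ and any tree counted by the event $\{C_n \geq 2\}$ then has quantum symmetry.

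To prove the deterministic statement, I would exhibit two non-trivial automorphisms with disjoint support and invoke Proposition~\ref{schmidt_criterion}. Each cherry supplies a natural candidate: as recorded in the remark above, a cherry $(u_1, u_2, v)$ gives the transposition swapping the two degree-one vertices $u_1, u_2$ and fixing everything else; this is a non-trivial automorphism of order two whose support is exactly $\{u_1, u_2\}$. Given two cherries $(u_1, u_2, v)$ and $(u_3, u_4, w)$, the two associated transpositions have supports $\{u_1, u_2\}$ and $\{u_3, u_4\}$, so everything comes down to showing that these two leaf-pairs are disjoint.

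The heart of the argument, and the step I expect to require the most care, is precisely this disjointness, for which I would exploit the degree bookkeeping built into the definition of a cherry. First, two distinct cherries cannot share their centre: the centre $v$ has degree three, and for $n \geq 5$ a degree-three vertex in a tree has at most two neighbours of degree one (if all three of its neighbours were leaves, the four vertices would form an isolated $K_{1,3}$, contradicting connectedness), so at most one cherry can be centred at a given vertex. Hence two distinct cherries have centres $v \neq w$. Second, they cannot share a leaf: a common degree-one vertex would be adjacent to both $v$ and $w$ and thus have degree at least two, a contradiction. Combining these, $\{u_1, u_2\} \cap \{u_3, u_4\} = \emptyset$, the two transpositions have disjoint support, and Proposition~\ref{schmidt_criterion} yields quantum symmetry. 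Assembling the pieces, this deterministic statement together with Theorem~\ref{two_cherries} gives $\PP[T_n \text{ has quantum symmetry}] \geq \PP[C_n \geq 2] \to 1$, which is exactly part (iv); the finitely many small values of $n$ excluded above are irrelevant in the limit, and parts (i)--(iii) are cited.
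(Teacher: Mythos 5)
Your proposal is correct and follows essentially the same route as the paper: parts (i)--(iii) are cited from the literature, and part (iv) is obtained by combining Theorem~\ref{two_cherries} with Proposition~\ref{schmidt_criterion} via the two cherry-flip transpositions. You in fact supply a detail the paper leaves implicit, namely the verification (for $n \geq 5$) that two distinct cherries can share neither their centre nor a leaf, so the two supports are genuinely disjoint.
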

\begin{proof}
\begin{enumerate}[(i)]
    \item This follows from Proposition~\ref{almost_all_ca} and Proposition~\ref{ca_aut}.
    \item By combining Proposition~\ref{almost_all_ca} and Proposition~\ref{ca_qaut} we obtain the claimed result.
    \item By Theorem~\ref{two_cherries} almost all trees have in particular \textit{one} cherry. So their automorphism group contains a copy of $\Z_2$.
    \item This is exactly the statement of Theorem~\ref{thm:trees_qsym}.
\end{enumerate}
\end{proof}

\begin{rem}
Note that Lupini, Man\v{c}inska and Roberson in fact showed that $\QAut(\Gamma) = \{\id\}$ for almost all graphs, but this implies in particular that $\QAut(\Gamma) = \Aut(\Gamma)$ for almost all graphs, which is the actual definition of having no quantum symmetry.
\end{rem}

\bibliography{main}{}
\bibliographystyle{alpha}
\end{document}